\documentclass[12pt]{article}
\usepackage[margin=1in]{geometry}
\usepackage{lmodern}
\usepackage[T1]{fontenc}
\usepackage{amssymb,amsthm,amsmath,amsopn}
\usepackage[normal]{subfigure}
\usepackage{import}
\usepackage{verbatim}
\usepackage{mathrsfs}
\usepackage{colonequals}
\usepackage{paralist}
\usepackage{ctable}
\usepackage{bm}
\usepackage{mathtools}
\usepackage[colorlinks=true,linkcolor=blue,citecolor=blue]{hyperref}%
\usepackage[capitalise,noabbrev]{cleveref}
\usepackage{setspace}
\usepackage{microtype}
\usepackage{caption}

\graphicspath{{./images/}}

\newtheorem{theorem}{Theorem}[section]

\newtheorem{lemma}[theorem]{Lemma}

\theoremstyle{definition}

\newcommand{\R}{\mathbb{R}}

\newcommand{\A}{\mathcal{A}}



\newcommand{\be}{\begin{equation}}
\newcommand{\ee}{\end{equation}}
\newcommand{\bea}{\begin{align}}
\newcommand{\eea}{\end{align}}
\newcommand{\bead}{\begin{aligned}}
\newcommand{\eead}{\end{aligned}}
\newcommand{\bsub}{\begin{subequations}}
\newcommand{\esub}{\end{subequations}}
\newcommand{\bgd}{\begin{gathered}}
\newcommand{\egd}{\end{gathered}}
\newcommand{\vect}[1]{\bm{#1}}

\newcommand{\ce}{\colonequals}

\newcommand{\til}[1]{\tilde{#1}}

\newcommand{\tran}{^T} 

\DeclareMathOperator{\trace}{tr}
\DeclareMathOperator{\V}{vec}
\DeclareMathOperator{\diag}{diag}

\def\grad{\nabla} 
\def\div{\tsp\mathrm{div}}


\newcommand{\tsp}{\;\!}
\newcommand{\stnsp}{\!\!\:}
\newcommand{\ssp}[1]{{#1 \stnsp}}

\newcommand*{\diff}{\mathop{}\!\mathrm{d}}


\newcommand{\vphi}{\varphi}
\newcommand{\lam}{\lambda}

\newcommand{\Lam}{\Lambda}



\newcommand{\bx}{\bm{x}}
\newcommand{\n}{\bm{n}}

\newcommand{\ip}[2]{\langle#1,#2\rangle}

\newcommand{\vx}{\vect{x}}
\newcommand{\vy}{\vect{y}}
\newcommand{\vz}{\vect{z}}
\newcommand{\vn}{\vect{n}}


\usepackage{titlesec}

\newcommand{\titleauthor}[2]{
	\hphantom{.}
	\vspace{.5in}
	\begin{center}
		\textbf{\uppercase{#1}}
	\end{center}
	\begin{center}
		{\small \uppercase{#2}}
	\end{center}}
	
\newcommand{\ab}[1]{
	\vspace{0in}
		{\footnotesize \textbf{Abstract.}~{#1}}
	\vspace{0in}}
	
\newcommand{\kw}[1]{
	\vspace{0in}
	\noindent\quad{\footnotesize \textbf{Keywords.}~{#1}}\\
	\vspace{0in}}
	
\newcommand{\amssub}[1]{
	\vspace{0in}
	\noindent\quad{\footnotesize \textbf{AMS subject classifications.}~{#1}}\\
	\vspace{0in}}

\titleformat{\title}[hang]{\large\bfseries}{}{0em}{\LARGE\bfseries}[]
\titleformat{\section}[hang]{\bf}{\thesection.}{.5em}{}[]
\titleformat{\subsection}[hang]{\it}{\thesubsection.}{.5em}{}[]
\titleformat{\subsection}[hang]{\it}{\thesubsection.}{.5em}{}[]
\titleformat{\subsubsection}[hang]{\it}{\thesubsubsection.}{.5em}{}[]

\captionsetup{font=footnotesize}

\begin{document}

\titleauthor{A Continuation Method for Computing Constant Mean Curvature Surfaces with Boundary}{N.\,D. Brubaker\footnotemark[1]}

\footnotetext[1]{Department of Mathematics and Center for Computational and Applied Mathematics, California State University, Fullerton, Fullerton, CA, 92831 (\texttt{nbrubaker@fullerton.edu})}

\ab{Defined mathematically as critical points of surface area subject to a volume constraint, constant mean curvatures (CMC) surfaces are idealizations of interfaces occurring between two immiscible fluids. Their behavior elucidates phenomena seen in many microscale systems of applied science and engineering; however, explicitly computing the shapes of CMC surfaces is often impossible, especially when the boundary of the interface is fixed and parameters, such as the volume enclosed by the surface, vary. In this work, we propose a novel method for computing discrete versions of CMC surfaces based on solving a quasilinear, elliptic partial differential equation that is derived from writing the unknown surface as a normal graph over another known CMC surface. The partial differential equation is then solved using an arc-length continuation algorithm, and the resulting algorithm produces a continuous family of CMC surfaces for varying volume whose physical stability is known. In addition to providing details of the algorithm, various test examples are presented to highlight the efficacy, accuracy and robustness of the proposed approach.}\\

\kw{constant mean curvature, interface, capillary surface, symmetry-breaking bifurcation, arc-length continuation}

\amssub{49K20, 53A05, 53A10, 76B45}

\section{Introduction}

Determining the behavior of an interface between nonmixing phases is crucial for understanding the onset of phenomena in many microscale systems. For example, interfaces induce capillary action in tubules~\cite{finn2002eight}, produce beading in microfluidics~\cite{lipowsky1999liquid}, and change the wetting properties of patterned substrates~\cite{lau2003super}. Additionally, stiction, the leading cause of failure in manufacturing microelectromechanical systems devices, is caused by an interfacial tension~\cite{tas1996stiction}. 

Constant mean curvature (CMC) surfaces are the simplest idealization of an interface and are mathematically defined as critical points of the surface area functional subject to a volume constraint. Physically, the surface area functional represents an energy arising from a differential attraction at the interface, while the volume constraint captures incompressibility of the fluid~\cite{finn1986equilibrium}. A necessary and sufficient condition for such critical points is that 
\be\label{cmc_eq}
 2H = \lam
\ee
at every point on the surface, where $H$ denotes the mean of the surface's two principle curvatures and the parameter $\lam$, whose value gives the magnitude of the hydrostatic pressure jump, is a Lagrange multiplier used for setting the volume encapsulated by the surface. In applications, interfaces are commonly attached to rigid components, such as substrates or rods~(cf.~\cite{bostwick2015stab}), so that the CMC equations must be coupled with boundary conditions that fix the boundary of the closed curve. The resulting system of equations is called Plateau's problem.

Equation \eqref{cmc_eq} is highly nonlinear, so predicting the configuration of an interface with prescribed boundary is difficult. Multiple existence theorems have been proved, along with nonuniqueness due to the presence of at least two geometrically distinct ``small'' and ``large'' solutions, yet not much else is known about the solution set; see~\cite{lopez2013cmc}. Even when the boundary $\Gamma$ is a circle, it has essentially only been determined that (1) $|H|$ is necessarily less than the inverse of the radius of $\Gamma$~\cite{ehinz1969nonexist}; (2) there are two spherical cap solutions; and (3) there is a nonrotational, self-intersecting, compact CMC surface with genus greater than 2~\cite{kapouleas1991compact}.

With these analytical challenges, significant effort has been invested in developing numerical methods to construct discrete analogs of CMC surfaces. The most common approach is to triangulate the surface and then approximate geometric quantities on the mesh to construct an optimization problems for determining the locations of the vertices; see~\cite{polthier2002discrete,metzger2004numerical,dzuik2006fem,pan2012robust,crane2013robust,renka2015simple}. Notably, Surface Evolver program~\cite{brakke1992surface}, basded on this approach, has been used to solve numerous engineering problems in, for example, soldering~\cite{singler1996computer}, capillarity~\cite{collicott2004computing,peraud2014geometry}, and mechanics~\cite{paulsen2015optimal}. A second common approach, built upon an analytic construction in~\cite{wente1986counterexample}, uses quadrilateral nets to approximate the surface, after which a corresponding discrete integrable system is formed and solved; see~\cite{bobenko1999disc} for isothermal constructions and \cite{dorfmeister1998weierstrass} for the Dorfmeister, Pedit, and Wu recipe. 

Both of the above methods have limiting factors when being used to solve Plateau problems arising in applications. Integrable systems methods are mostly used to construct complete surfaces and cannot be easily adapted to preserve boundary curves. Triangulation methods, which have no trouble fixing a boundary, find energy minimizers via direct approaches and, consequently, can only find stable CMC surfaces, i.e., local minimizers. Additionally, seemingly all of the discrete approaches are not well adapted to find families of CMC surfaces when the prescribed curvature is smoothly varied, a process that is often necessary in applications and is analogous to varying the strength of the pressure induced by hydrostatics. 

In this paper we propose a new method for numerically finding CMC surfaces with fixed boundary that is  different from both the integrable systems and triangulation approaches discussed. In particular, we treat \eqref{cmc_eq} as a classical partial differential equation, over some fixed domain $\Omega \subset \R^2$, that determines the parameterization $\bx \colon \Omega \to \R^3$ of the surface. The resulting partial differential equation is then discretized and solved. Of course, this process takes some care because \eqref{cmc_eq}, in terms of unknown coordinates $(x,y,z)$ of the parameterization $\bx$, is underdetermined. Usually this indeterminacy is removed by rewriting the surface $\bx$ as a graph%
\footnote{It is also tempting is to use the differential geometric identity $\Delta \bx = 2 H \n$ for the surface's intrinsic Laplacian $\Delta$ and normal vector $\n$, which gives a quasilinear partial differential equation for each of the coordinates~\cite{finn1992greens}. Unfortunately, conformal invariances of the surface make these equations dependent, and additional auxiliary conditions must be added, which can make the resulting system overdetermined and increases the dimension of the problem; see~\cite{myshkis1987lowgrav}.} 
$z = z(x,y)$ over $\Omega$ that spans the curve $\Gamma$, and \eqref{cmc_eq} reduces to a nonparametric, quasilinear partial differential equation with Dirichlet data~(cf.~\cite{lobaton2007computation}); however, a graph of this form can only construct surfaces that simply project on hyperplanes of $\R^3$, removing our ability to capture large cap solutions. Motivated by this procedure, we will look for CMC surfaces represented as a normal graph $\bx = \bx_0 + \vphi \tsp\n_0$, where the function $\vphi\colon \Omega \to \R$ is to be determined and the map $\bx_0:\Omega \to \R^3$ is a known parameterization of another CMC surface sufficiently close to $\bx$ whose normal vector is $\n_0$. Theory of such a representations is well established in the literature~\cite{koiso2002deformation}. 

Our proposed method utilizes numerical arc-length continuation to determine a family of CMC surfaces. Arc-length continuation is a common and efficient method for solving general equations of the form $f(\vphi,\lam) = 0$ for a mapping $f \colon X \times \R \to Y$, where $X \subset \R^k$ and $Y \subset \R^{k}$ are the discrete approximates of given function spaces~\cite{allgower1990numerical,doedel1991numerical,doedel2007auto,keller1987lectures}. Here, $\vphi \in X$ is the desired solution, and $\lam$ is a free bifurcation parameter, also known as a nonlinear eigenvalue. Given an initial solution $(\vphi_0,\lam_0)$ of $f = 0$, these methods robustly trace out the remainder of the connected solution branch by finding of sequence of solutions along the curve $\gamma(s) = (\vphi(s),\lam(s))$, which is parameterized as a function of the branch's arc length measured relative to the initial point. The new solutions $\gamma_{i+1}$ are determined from a given solution $\gamma_i$ in two steps. First, an Euler predictor step of specified size%
\footnote{The sign of $h$ determines which direction the curve is traversed.} 
$h$ is taken in a direction tangent to the curve to produce a guess $\til{\gamma}_{i+1}$, i.e.,
\[
 \til{\gamma}_{i+1} = \gamma_i + h \, t(f'(\gamma_i)),
\]
where $t(f'(\gamma_i))$ is the unique normalized vector in the null-space of the Jacobian $J \in \R^{k\times (k+1)}$ of $f$ at regular points of the corresponding mapping. Then the guess $\til{\gamma}_{i+1}$ is corrected to a new solution $\gamma_{i+1}$ by looking for the point on the curve $(x(s),\lam(s))$ nearest to $\til{\gamma}_{i+1}$ via the optimization problem
\[
 \min_{\gamma \in X \times \R }\{\|\gamma - \til{\gamma}_{i+1}\| : f(\gamma) = 0\}.
\]
The corresponding solution can be found with a Newton-like iteration $\eta^{(k+1)} = \eta^{(k)} - (f')^+(\eta^{(k)}) \, f'(\eta^{(k)})$ with starting condition $\eta^{(0)} = \til{\gamma}_{i+1}$, where the notation $A^+$ indicates the Moore--Penrose pseudoinverse of a matrix $A$.

To solve \eqref{cmc_eq} with prescribed boundary via continuation, we let $\lambda$ be free, add a volume constraint to the problem, and consider the value of the volume, $V$, to be a bifurcation parameter. Then for a given CMC surface $\bx_0$, a new CMC surface $\bx = \bx_0 + \vphi \,\n_0$ is constructed after finding the solution to an equation of the form $f(\vphi,\lam,V) = 0$ which, in particular, fixes the mean curvature of the discrete surface. Iteratively more new solutions can be determined by setting $\bx_0 = \bx$ and repeating the process. Unlike many triangulation methods, this approach has no trouble constructing both stable and unstable solutions, and easily determines the locations of bifurcations, both simple and otherwise, which application-wise generally signify transitions in the qualitative behavior of the corresponding physical system, such as pinch-offs~\cite{eggers2008physics} or touchdowns~\cite{pelesko2003modeling}. Note that the stability to be considered is not with respect to arbitrary variations, but instead done with respect to those that are volume preserving, a choice that is related to whether pressure is applied directly or induced from hydrostatics. Such a distinction yields different results; see~\cite{bostwick2015stab} for details. However, by removing the corresponding volume constraint, the method can be easily modified to consider stability under arbitrary variations.

There are three other benefits of our method. First, we discretize the problem using a pseudospectral collocation~\cite{trefethen2000spectral}, so the method has high-order accuracy\footnote{Although, if high-order accuracy in not needed, finite differences can be used to take advantage of sparsity.}. Second, our approach is easily adaptable to compute field driven mean curvature surfaces that satisfy $2 H = f(\bx;\lam)$  and model interfaces deflected by a combination of magnetic~\cite{jamin2011instability}, electrostatic~\cite{brubaker2011nonlinear,moulton2009catenoid} and/or gravitational~\cite{bohme1980twodim,cohen2017shape} forces. Third, our method also provides a basis to extend to other geometric minimization problems that appear in biomechanics, geometry and soft-matter physics~\cite{cerda2003geometry,giomi2012minimal}.

In the next section we will provide the necessary theory of CMC surfaces. Afterwards, in \cref{sec:method}, we discuss how to place \eqref{cmc_eq} into the proper framework to apply numerical continuation, review pseudospectal discretizations and outline the proposed method. Then, in \cref{sec:disc_and_examp}, we present test results of reconstructed small and large cap solutions when the boundary of the CMC surfaces is the unit circle. Also, we present other examples from the literature that demonstrate the method's ability to capture bifurcations.

\section{Background on CMC ssurfaces}

Consider a two-dimensional surface $\Sigma$ immersed in $\R^3$ that is parameterized by the map $\bx \colon \Omega \to \R^3$ over the connected domain $\Omega \subset \R^2$. We will assume that the map $\bx$, with components $(x,y,z)$, is in $C^2(\Omega;\R^3)$, and is also regular at every point in $\Omega$. In denoting the coordinates of $\Omega$ as $(u,v)$, this restriction implies that the vectors $\bx_u \ce \partial_u \bx$ and $\bx_v \ce \partial_v \bx$ are linearly independent at every point in $\Omega$; hence, the cross product $\bx_u \times \bx_v$ does not vanish and the surface's normal vector
\[
 \n = \frac{\bx_u \times \bx_v}{|\bx_u \times \bx_v|}
\]
is well defined. The surface is then characterized by its metric tensor $g = \ip{\partial\tsp \bx}{\partial\tsp\bx}$ and the matrix of its second fundamental form $h = \ip{\partial^2\bx}{\n}$, whose components are given, respectively, by
\[ 
 \bgd
 g_{11} = E = \ip{\bx_{u}}{\bx_{u}}, \quad
 g_{12} = g_{21} = F = \ip{\bx_{u}}{\bx_{v}}, \quad
 g_{22} = G = \ip{\bx_{v}}{\bx_{v}}, \\
 h_{11} = L = \ip{\bx_{uu}}{\n}, \quad
 h_{12} = h_{21} = M = \ip{\bx_{uv}}{\n}, \quad
 h_{22} = N = \ip{\bx_{vv}}{\n}.
 \egd
\]
In local coordinates the shape operator (or Weingarten map) $S$ becomes $S = g^{-1} h$; hence,   the mean curvature is defined as $H = ({1}/{2})\trace{(S)}$, and the surface $\Sigma$ has CMC if and only if 
\be\tag{\ref{cmc_eq}}
 2H = \lam
\ee
for a given parameter $\lam$ in $\R$, which arises as a necessary and sufficient condition for determining the critical points of the surface area functional
\[
 \A[\bx] = \int_\Omega \sqrt{E G - F^2} \diff u \diff v
\]
over the set of $C^2$ regular surfaces that encapsulate a fixed volume $V$. The parameter $\lam$ in \eqref{cmc_eq} is a Lagrange multiplier used to enforce the volume constraint. 

With \eqref{cmc_eq} and a given volume $V$, finding CMC surfaces that span a given Jordan curve $\Gamma$ can then be framed as solving the following system for $(\bx,\lam)$ in $C^2(\Omega;\R^3) \times \R$:
\bsub\label{eq:cmc_prob}
\begin{gather}
 2H = \lam \quad \text{on }\Sigma, \quad 
 \bx|_{\partial \Omega} = \Gamma, \label{cmc_boundary}\\
 \V[\bx] \ce \frac{1}{3}\int_\Omega \bx \cdot (\bx_u \times \bx_v)  \diff u \diff v = V.\label{volume}
\end{gather}
\esub 
In \eqref{volume} the integral gives the signed volume of the region bounded by the surface and the cone connecting $\Gamma$ to the origin. Although not necessarily needed for finding CMC surfaces, \eqref{volume} is purposefully included---instead of fixing $\lam$, solving \eqref{cmc_boundary} and afterwards determining $V$---so that the volume of the liquid, $V$, can be used as a control parameter to quasistatically mimic evaporation or simulate the direct removal of a liquid~\cite{brubaker2015twodim_a}. 

\subsection{Stability}
Stability of a CMC surface $\bx$ is determined by the sign of the second variation of surface area over perturbations that preserve volume and fix the map's boundary data~\cite{bostwick2015stab}. The eigenvalue problem corresponding to the resulting bilinear form is 
\be\label{twist_eval}
 - \Delta_\Sigma \vphi - (\lam^2 - 2K)\vphi + \chi = \mu \vphi \quad  \text{in } \Omega, \quad
 \vphi = 0 \quad \text{on } \partial \Omega, \quad
 \int_\Sigma \vphi \diff \Sigma = 0
\ee
for $(\vphi,\lam) \in C^2(\Omega) \times \R$. Here $\vphi$ is the magnitude of the normal field displacement, $\Delta_\Sigma$ is the Laplace--Beltrami operator (or surface Laplacian) of $\Sigma$ defined via
\[
\Delta_\Sigma \vphi \equiv \frac{1}{\sqrt{|g|}}\div\big(\sqrt{|g|} g^{-1} \grad \vphi\big)
\]
and $K = \det(g^{-1}h) = (LN-M^2)/(EG-F^2)$ is the Gaussian curvature of the surface. The free parameter $\chi$ in \eqref{twist_eval} is set by the surface integral of $\vphi$, which is a condition ensuring that the disturbances are volume preserving. In the context of CMC surfaces, \eqref{twist_eval} is known as a twisted Dirichlet eigenvalue problem, which is self-adjoint and has a countable set of real eigenvalues, each of finite multiplicity~\cite{barbosa2000eigenvalue}. For the surface to be called stable, all of the eigenvalues must be greater than zero.

Aside from a few situations where geometric quantities simplify drastically~\cite{mccuan201extremities,lopez2012bifurcation,vogel1992stability}, the computation of the spectrum of \eqref{twist_eval} must be done numerically. As will be shown later, our method approximates the eigenvalues---and corresponding eigenmodes---using minimal extra work, which allows us to naturally determine when the index of the surface (i.e., the number of negative eigenvalues) changes as $V$ is varied.

\section{Method}\label{sec:method}

The basis of our method is to solve \eqref{eq:cmc_prob} using arc-length continuation, which will determine families of CMC surfaces that depend continuously on the parameters $\lam$ and $V$; however, as stated, \eqref{eq:cmc_prob} is underdetermined since the mean curvature, given locally in terms of the parameterization $\bx$, defines a map that takes $C^2(\Omega;\R^3)$ to $C(\Omega)$. To remove this indeterminacy a specific parameterization of the surface must be chosen. Inspired by theoretical results for bifurcations in CMC surfaces~\cite{koiso2002deformation}, we will look for solutions written as a normal graph over a given CMC surface. 

First, assume the surface $\bx_0 \colon \Omega \to \R^3$ (with normal $\n_0$) is a smooth immersion of constant mean curvature $\lam_0$ that satisfies $\bx_0|_{\partial \Omega} = \Gamma$ and has fixed signed volume, $\V[\bx_0] = V_0$. Then let $U$ be a sufficiently small open set of the Hölder space $C_0^{2,\alpha}(\Omega)$, containing the zero function $0$, chosen so the normal graph defined via $\bx = \bx_0 + \vphi \tsp\n_0$ is also an immersion for all $\vphi\in U$. Now denote the mean curvature of $\bx$ as $H(\vphi) \in C^{\alpha}(\Omega)$. With this setup, $\bx$ has constant mean curvature if and only if
\[
 2 H(\vphi) - \lam = 0
\] 
for some $\lam \in \R$. Observe that by assumption: (i) $ 2H(0) - \lam_0 = 0$; (ii) $\bx|_{\partial \Omega} = \Gamma$, since $\vphi$ vanishes on $\partial \Omega$; and (iii) $\V(0) = V_0$, where $\V(\vphi)$ denotes the signed volume of the immersion $\bx$. Thus, in defining the map $f \colon U \times \R \times \R \to C^\alpha(\Omega) \times \R$ by
\be\label{cont_funct}
 f(\vphi,\lam,V) = (2 H(\vphi) - \lam,\V(\vphi) - V),
\ee
the CMC problem \eqref{eq:cmc_prob} can be restated as 
solving the equation
\be\label{new_cmc_equation}
 f(\vphi,\lam,V) = (0,0).
\ee
Note that again $(\vphi,\lam,V) = (0,\lam_0,V_0)$ is indeed a solution of \eqref{new_cmc_equation}. 

Instead of determining the coordinates of a new surface directly, in this reformulation we search for a solution $\vphi \in C^{2,\alpha}_0(\Omega)$ of the nonlinear elliptic partial differential equation $2 H(\vphi) = \lam$, and then construct $\bx$, which is a surface of constant mean curvature $\lam$. Hence, the dimensionality of the system is reduced, and, upon discretizing, the resulting $f$ will provide a map between $\R^{k+1}$ and $\R^{k}$ for which arc-length continuation can be applied. We should remark that existence and uniqueness of solutions $(\vphi,\lam,V)$ of \eqref{new_cmc_equation} within a neighborhood of $\lam_0$ is guaranteed when the Jacobi operator $L \ce - \Delta_\Sigma  - (\ssp{\lam_0}{}^2 - 2K)$ over $H^1_0(\Sigma)$ has either no zero eigenvalues or the corresponding eigenspace is one dimensional with a basis vector of nonzero mean~\cite{koiso2002deformation}. (When these conditions fail, existence usually still holds but uniqueness fails due to a bifurcation.)

To construct a predictor step off the known solution $(0,\lam_0,V_0)$ for our continuation algorithm, we need to find a unique element in the null space of the Fréchet derivative of $f$. The following lemma proves necessary.
\begin{lemma}
The function $f$ defined in \eqref{cont_funct} is Fréchet differentiable with respect to $\vphi$, $\lam$ and $V$. Specifically, it can be shown that partial derivatives at $(\vphi,\lam,V) = (0,\lam_0,V_0)$ satisfy
\bsub\label{part_frechet_f}
\begin{gather}
 D_\vphi f(0,\lam_0,V_0) \psi = (\Delta \psi + (4{H}^2 - 2K)\psi, \int_\Omega \psi \sqrt{|g|} \diff \Omega) \quad \text{for }\ \psi \in C_0^2(\Omega), \\
 D_\lam f(0,\lam_0,V_0) \Lam = (-\Lam, 0), \qquad D_V f(0,\lam_0,V_0)W = (0, - W)
\end{gather}
\esub
where $H$, $K$ and $g$, respectively, are the mean curvature, Gauss curvature and metric tensor of the underlying CMC surface $\bx \colon \Omega \to \R^3$. As a result, the total derivative
\be\label{tot_frechet_f}
 f'(0,\lam_0,V_0)(\psi,\Lam,W) = (\Delta \psi + (4H^2 - 2K)\psi - \Lam, \int_\Omega \psi \sqrt{|g|} \diff \Omega - W)
\ee
for $(\psi,\Lam,W) \in C_0^2(\Omega) \times \R \times \R$.
\end{lemma}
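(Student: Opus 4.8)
The plan is to dispatch the two trivial directions and concentrate the effort on the linearization in $\vphi$. Joint Fr\'echet differentiability will follow once I exhibit the three partial derivatives and verify they depend continuously on the base point, so I would first record that $f$ is affine in $\lam$ and $V$: only the summands $-\lam$ and $-V$ involve these variables, whence $D_\lam f(0,\lam_0,V_0)\Lam = (-\Lam,0)$ and $D_V f(0,\lam_0,V_0)W = (0,-W)$ are immediate. All the substance then lies in showing that the map $\vphi \mapsto (2H(\vphi),\V(\vphi))$ is Fr\'echet differentiable as a map $C_0^{2,\alpha}(\Omega) \to C^\alpha(\Omega)\times\R$ and in computing its derivative at $\vphi=0$.

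To establish differentiability, I would write out the normal graph $\bx = \bx_0 + \vphi\,\n_0$ and observe that the fundamental-form coefficients $E,F,G,L,M,N$ are algebraic expressions in $\vphi$, $\grad\vphi$, the second derivatives of $\vphi$, and the fixed smooth data of $\bx_0$. Hence $H(\vphi) = \tfrac12\trace(g^{-1}h)$ is a smooth function of the $2$-jet of $\vphi$ with denominator $EG-F^2 = |g|$ bounded away from zero on the immersion neighborhood $U$, and $\V(\vphi)$ is cubic in $\vphi$ and $\grad\vphi$. Smoothness of the induced superposition operators on the H\"older (Banach-algebra) spaces then yields Fr\'echet differentiability and reduces the identification of the derivative to the G\^ateaux derivatives $\frac{d}{dt}\big|_{t=0}2H(t\psi)$ and $\frac{d}{dt}\big|_{t=0}\V(t\psi)$.

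The geometric heart, and the step I expect to be hardest, is the first G\^ateaux derivative. I would set $\bx_t = \bx_0 + t\psi\,\n_0$, note that $\partial_t\bx_t|_{t=0} = \psi\,\n_0$ is purely normal along $\bx_0$, and differentiate $g(t)$ and $h(t)$ coefficient by coefficient, feeding the results into $H = \tfrac12\trace(g^{-1}h)$ via the product rule together with $\frac{d}{dt}g^{-1} = -g^{-1}(\dot g)\,g^{-1}$. Computing $\dot L,\dot M,\dot N$ requires the variation of the unit normal, $\dot\n = -\grad_\Sigma\psi$; once these are assembled, the second-order terms organize into the Laplace--Beltrami operator $\Delta_\Sigma\psi$ (written simply $\Delta$ in \eqref{part_frechet_f}) while the zeroth-order terms collect into $|A|^2\psi$, where $|A|^2 = \kappa_1^2 + \kappa_2^2 = (2H)^2 - 2K = 4H^2 - 2K$. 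Because the variation is purely normal, no tangential reparametrization terms arise, and I obtain $D_\vphi(2H)(0)\psi = \Delta_\Sigma\psi + (4H^2-2K)\psi$, the Jacobi operator of \eqref{twist_eval}. The careful bookkeeping of $\dot\n$ and the recognition of the combination $4H^2 - 2K$ is where essentially all the work resides.

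The volume derivative is then routine: differentiating $\V(\bx_t) = \tfrac13\int_\Omega \bx_t\cdot(\partial_u\bx_t\times\partial_v\bx_t)\,\diff u\,\diff v$ gives three scalar-triple-product terms, and integrating the two that carry derivatives of the variation by parts---using that $\psi$ vanishes on $\partial\Omega$, so all boundary contributions drop---collapses the expression to $\int_\Omega(\partial_t\bx)\cdot(\bx_u\times\bx_v)\,\diff u\,\diff v$. Evaluating at $t=0$ with $\partial_t\bx = \psi\,\n_0$ and $\bx_u\times\bx_v = \sqrt{|g|}\,\n_0$ yields $D_\vphi\V(0)\psi = \int_\Omega \psi\sqrt{|g|}\,\diff\Omega$. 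Finally I would assemble the total derivative by linearity, $f'(0,\lam_0,V_0)(\psi,\Lam,W) = D_\vphi f(0,\lam_0,V_0)\psi + D_\lam f(0,\lam_0,V_0)\Lam + D_V f(0,\lam_0,V_0)W$, reproducing \eqref{tot_frechet_f}.
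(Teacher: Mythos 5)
Your proposal is correct and follows essentially the same route as the paper's own proof, which simply observes linearity in $\lam$ and $V$, obtains the $\vphi$-derivative from the G\^ateaux derivative of the mean curvature under a normal variation (citing Wente), and sums the partial derivatives to get the total derivative (citing Chang). You have merely carried out explicitly the computations the paper delegates to its references---the variation $\dot{\n} = -\grad_\Sigma\psi$, the identification $|A|^2 = 4H^2 - 2K$, and the integration by parts giving $D_\vphi \V(0)\psi = \int_\Omega \psi\sqrt{|g|}\diff\Omega$---and all of these details are accurate.
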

\begin{proof}
Since $f$ is linear in $\lam$ and $V$, differentiability with respect to those parameters, along with their given formulas, is clearly valid. The result with respect to $\vphi$ follows from the Gâteaux derivative~(cf.~\cite{wente1980stability}), and expression~\eqref{tot_frechet_f} is produced from summing the partial derivatives in~\eqref{part_frechet_f}~\cite{chang2005na}.
\end{proof}

\subsection{Discretization}

For simplicity, let us assume that the reference domain $\Omega$ is given by the two-dimensional rectangle\footnote{More complicated reference domains can be constructed via, say, domain decomposition, although rectangular domain are often sufficient.} $(-l_u,l_u)\times(-l_v,l_v)$ with $l_u>0$ and $l_v >0$. Then $\Omega$ can then be discretized by the tensor-product grid $(u_i,v_j)$ of Chebyshev collocation points
\[
 (u_i,v_j) = (l_u \cos(i\pi/n), l_v \cos(j\pi/m)), \quad 
 i = 0,1, \ldots, n, \quad
 j = 0,1, \ldots, m, 
\]
and functions mapping $\Omega$ to $\R$ become $(n +1) \times (m + 1)$ matrices of values on this grid. To compute $u$- or $v$-derivatives of these functions, let $D_u$ be the standard one-dimensional Chebyshev differentiation matrix with scale factor $1/l_u$ acting on $n + 1$ nodes, and define $D_v$ similarly. Then $\partial_u$ and $\partial_v$ are approximated, respectively, by a right matrix multiplication of $\ssp{D_u}\tran$ and a left matrix multiplication of $D_v$, i.e., the discrete analogs of $x_u$ and $x_v$ are $X \ssp{D_u}\tran$ and $D_v X$, where $X$ is the matrix approximation of a function $x\colon\Omega \to \R$. Finally, using Kronecker products these expressions can be equated to $L_u\vx$ and $L_v\vx$, where $L_u = D_u \otimes I_v$ and $L_v = I_u \otimes D_v$ for the $(n + 1) \times (n + 1)$ and $(m + 1) \times (m + 1)$ identity matrices $I_u$ and $I_v$. Also, $\vx = \mathrm{vec}(X)$ or, in others words, is the vector resulting from stacking the columns of $X$. The second-order differentiation operators $\partial_{uu}$, $\partial_{uv}$ and $\partial_{vv}$ become $L_{uu} = \ssp{D_u}^2 \otimes I_v$, $L_{uv} = D_u \otimes D_v$ and $L_{vv} = I_u \otimes \ssp{D_v}^2$. 

Similarly, a matrix $\vect{w}\tran\in \R^{1 \times (n+1)(m+1)}$ acting on the vectorized functions that spectrally approximates integration over $\Omega$ can be defined via $\vect{w}\tran = \ssp{\vect{w}_u}\tran \otimes \ssp{\vect{w}_v}\tran$, where $\vect{w}_u$ and $\vect{w}_v$ are vectors of Clenshaw--Curtis quadrature weights for $n+1$ and $m+1$ nodes and with scale factors $l_u$ and $l_v$, respectively~\cite[p.\,126]{trefethen2000spectral}. Hence, the discrete version of $\int_\Omega x \diff u \diff v$ is the inner product $\vect{w}\tran \vx $.

\subsection{Algorithm}

With the above procedure, the initially known CMC surface $\bx_0 = (x_0, y_0,z_0)$ and normal vector $\n_0$ can be discretized coordinatewise as $\vect{X}_0 = [\vx_0; \vy_0; \vz_0]$ and $\vect{N}_0 = [\vn_{01}; \vn_{02}; \vn_{03}]$. The analogous discrete normal graph is $\vect{X} = \vect{X}_0 + \bm{\Phi} \circ \vect{N}_0$, where $\bm{\Phi} = [1;1;1] \otimes \bm{\vphi}$ for the vectorized discrete normal field function $\bm{\vphi}$ and $\circ$ denotes the Hadamard (entrywise) product. Accordingly, problem~\eqref{new_cmc_equation} transforms to the discrete system $f_\mathrm{d}(\bm{\vphi},\lam,V) = \vect{0}$, where $f_\mathrm{d}$ maps $\R^{k+2}$ to $\R^{k+1}$ for $k = (n+1)(m+1)$. The new function $f_\mathrm{d}$ is formed by calculating the necessary discrete differential geometric quantities of $\vect{X}$, including the coefficients of the fundamental forms, and then using pointwise and matrix multiplication to formulate discrete analogs of the expressions in \eqref{cont_funct}. A bordering strategy is used to apply the fixed boundary conditions, although resampling provides an intriguing option where more complicated boundary condition arise~\cite{driscoll2015rectangular}. By construction, $f_\mathrm{d}(\bm{0},\lam_0,V_0) = \vect{0}$ up to some specified tolerance. 

Given $(\bm{0},\lam_0,V_0)$, a new solution $(\bm{\vphi},\lam,V)$ of $f_\mathrm{d} = 0$ is found by first taking a predictor step in the direction $\vect{t}$ tangent to the bifurcation curve at $(\bm{0},\lam_0,V_0)$ via $(\bm{\psi},\Lam,W) = (\bm{0},\lam_0,V_0) + h \, \vect{t}$ and then applying the Newton iteration
\[
 \bgd
 (\bm{\psi}^{(i+1)},\lam^{(i+1)},V^{(i+1)}) \ce (\bm{\phi}^{(i)},\lam^{(i)},V^{(i)}) - J(\bm{\phi}^{(i)},\lam^{(i)},V^{(i)})^+ J(\bm{\phi}^{(i)},\lam^{(i)},V^{(i)}), \\
  (\bm{\phi}^{(0)},\lam^{(0)},V^{(0)}) \ce (\bm{\psi},\Lam,W).
 \egd
\] 
Recall $\vect{t}$ is the unique, normalized vector in the null space of matrix $J(\bm{0},\lam_0,V_0)$ in $\R^{(k+1)\times (k+2)}$, which is the Jacobian $J$ at $(\bm{0},\lam_0,V_0)$ of the discrete function $f_\mathrm{d}$. From this process, the new discrete CMC surface is $\vect{X} = \vect{X}_0 + \bm{\Phi} \circ \vect{N}_0$, and more discrete CMC surfaces can be computed by mapping $\vect{X}$ to $\vect{X}_0$ and reapplying the above two steps.

While the tangent vector $\vect{t}$ in the first step can be determined from the Jacobian $J$ of $f_\mathrm{d}$, we instead compute relevant objects from the continuous problem, then discretize and solve. Observe that the Fréchet derivative of $f$ is given in~\eqref{tot_frechet_f}, so the null space of $f'(0,\lam_0,V_0)$ is then set by  
\be\label{nullspace_prob}
 \Delta \psi + (\lam_0^2 - 2K)\psi = \Lam ,  \quad 
 \int_\Omega \psi |g|^{1/2} \diff \Omega = W.
\ee
for $(\psi,\Lam,W)$. By fixing $\Lam$ in $\R$, standard alternative theorems, coupled with CMC regularity results, prove that the partial differential equation in~\eqref{nullspace_prob} has a unique solution $\psi$ in $C^2_0(\Omega)$, when, again, zero is not an eigenvalue of the Jacobi operator~\cite[p.\,954]{lopez2012bifurcation}. With the function $\psi$, the value of $W$ can then be set from the integral of $\psi|g|^{1/2}$ over $\Omega$ and the resulting triple $(\psi,\Lam,W)$ produces the unique element, upon normalization, of the one-dimensional null space of \eqref{nullspace_prob} that can be discretized to produce $\vect{t}$. 

Instead of carrying out the above procedure to find $\vect{t}$, in practice we set $W = 1$, solve the discretized version of the equations
\be\label{null_prob}
 (-\Delta \psi - (\lam_0^2 - 2K)\psi + \Lam,\int_\Omega \psi |g|^{1/2} \diff \Omega) =  (0,1)
\ee
for $(\psi, \Lam)$, and renormalize the resulting discretized triple $(\psi,\Lam,1)$ using the Euclidean distance. The benefit of this approach is the left-hand side of \eqref{null_prob} defines the same linear operator for twisted eigenvalue problem~\eqref{twist_eval}, which determines the stability of the CMC surfaces. So with little extra work these eigenvalues $\mu_i$ can be approximated, along with the corresponding eigenmodes, from the discretized operator to produce the index of the initial surface CMC surface, i.e., the CMC surface $\bx_0$ corresponding to the solution $(0,\lam_0,V_0)$. This allows us to easily detect changes in stability and the onset of bifurcations by tracking the bifurcation test functional  
\[
 \beta(\bx_0,\lam_0,V_0) = \mathop{\mathrm{sign}}{\left(\prod_i \mu_i(\bx_0,\lam_0,V_0)\right)}.
\]

The discretized version of problem \eqref{null_prob} is find the unit vector $\vect{t} = (\vect{t}_\mathrm{p}, 1)/\alpha \in \R^{k+2}$ for $\vect{t}_\mathrm{p} \in \R^{k+1}$ such that%
\footnote{The subscript $\mathrm{p}$ denotes that the indicated object is only part of something larger, i.e., $J_p$ is only part of the Jacobian $J$.}
\be\label{discr_null_prob}
 {J}_\mathrm{p} \tsp \vect{t}_\mathrm{p} = \vect{e}_{k+2}, \quad
 {J}_\mathrm{p} \ce 
 \begin{bmatrix}
  -L & \vect{1}_{b} \\
  \vect{w}\tran\diag{(|\vect{g}|^{1/2})} & 0
 \end{bmatrix}
 \in \R^{(k+1)\times(k+1)},
\ee
where $\vect{e}_{k+2}$ is the last standard unit basis vector of $\R^{k+2}$.
The $k \times k$ real matrix $L$ satisfies 
\[
 L 
 = A_{0} L_{uu} + A_{1} L_{uv} +  A_{2} L_{vv} + A_{3} L_{u} + A_{4} L_{v}  
 + \diag(4\vect{H}^2 - 2\vect{K}) 
\] 
at nodes of the tensor product grid that correspond to the points on the interior of $\Omega$. In this definition, the $A_i$'s are the diagonal matrices achieved from discretizing and then vectorizing the coefficients
\[
 {G}/{|g|}, \ 
 - 2{F}/{|g|}, \
 {E}/{|g|}, \
 |g|^{-1/2}\div\big(|g|^{-1/2}{(G,-F)}\big), \
 |g|^{-1/2}\div\big(|g|^{-1/2}{(-F,E)}\big),
\]
respectively, and $\vect{H}$ and $\vect{K}$ are the discretized and vectorized mean and Gauss curvatures. At the other nodes, $L = I \in \R^{k\times k}$, which enforces the homogeneous Dirichlet boundary condition on $\vphi$. Also, the vector $\vect{1}_{b} \in \R^k$ is $0$ at the boundary nodes and 1 otherwise. Finally, $\vect{w}\tran\diag{(|\vect{g}|^{1/2})}$ is the discretized integral operator of the last term in expression~\eqref{tot_frechet_f}. 

Upon computing $J_\mathrm{d}$, the discretized version of the twisted eigenvalue problem~\eqref{twist_eval} for determining the stability of the $\bx_0$, which is approximated by $\vect{X}_0$, becomes the generalized eigenvalue problem  
\be\label{discr_evalue_prob}
 J_\mathrm{p}
 \vect{v}
 =
 \mu \,
 B
 \vect{v},
 \qquad
 B \ce 
 \begin{bmatrix}
 I_b &  0 \\
 \vect{0}\tran & 0 
 \end{bmatrix}
\ee
for $\vect{v} \in \R^{k+1}$, where $I_b$ is an identity matrix modified so that the rows corresponding to boundary nodes are set to zero.

The Jacobian $J$ for the corrector step can be formulated similarly as $J_\mathrm{p}$:
\be\label{discr_jacobian}
 J \ce 
 \begin{bmatrix}
  L & -\vect{1}_{b} & \vect{0} \\
  \vect{w}\tran\diag{(|\vect{g}|^{1/2})} & 0 & -1
 \end{bmatrix}
 \in \R^{(k+1)\times(k+2)}.
\ee
The components of this matrix are the same as those defined above, except for being constructed at the predicted point $(\bm{\psi},\Lam,W)$ instead of initial point $(\bm{0},\lam_0,V_0)$.

The following algorithm sketches the arc-length continuation method described, which incorporates Euler predictor and Newton corrector steps.

\bigskip
\hrule
\hrule
\smallskip
\noindent\textbf{Algorithm.}
\smallskip
\hrule

{\setstretch{0}
\begin{enumerate}[]
 \item \textbf{input} 
 \begin{enumerate}[\quad]
  \item \textbf{begin}
  \item $\vect{X}_0 = [\vx_0; \vy_0; \vz_0]$, $\lam_0$, $V_0$  \hfill \textit{CMC surface and parameter values}
  \item such that $f_\mathrm{d}(\vect{0},\lam_0,V_0) = \bm{0}$;
  \item $\bm{\vphi} = \vect{0}$; \hfill \textit{normal field}
  \item $h$; \hfill \textit{step length}
  \item \textbf{end}
 \end{enumerate} 
 \item \textbf{repeat} 
 \begin{enumerate}[\quad]
  \item construct $J_\mathrm{p}$ via \eqref{discr_null_prob} at $(\bm{0},\lam_0,V_0)$; \hfill \textit{unit tangent vector}
  \item find the eigenvalues of \eqref{discr_evalue_prob}; \hfill \textit{stability of $\vect{X}_0$}
  \item compute $\vect{t}$ via \eqref{discr_null_prob} and 
  \item $(\bm{\psi},\Lam,W) \ce (\bm{0},\lam_0,V_0) + h \tsp \vect{t}$;  \hfill \textit{predictor step}
  \item $\bm{\Psi} = [1;1;1] \otimes \bm{\vphi}, \quad \vect{X} = \vect{X}_0 + \bm{\Psi} \circ \vect{N}_0$;\hfill \textit{predicted surface}
  \item
  \item \textbf{repeat}
  \begin{enumerate}[\quad]   
  \item construct $J$ at $(\bm{\psi},\Lam,W)$ from \eqref{discr_jacobian};\hfill \textit{Jacobian}
  \item $(\til{\bm{\psi}},\til\Lam,\til W) \ce (\bm{\psi},\Lam,W) - J(\bm{\psi},\Lam,W)^+ J(\bm{\psi},\Lam,W);$ \hfill \textit{corrector loop}
  \item $(\bm{\psi},\Lam,W) \ce (\til{\bm{\psi}},\til\Lam,\til W);$ 
  \item $\bm{\Psi} = [1;1;1] \otimes \bm{\psi}, \quad \vect{X} = \vect{X}_0 + \bm{\Psi} \circ \vect{N}_0$; \hfill \textit{predicted surface}
  \end{enumerate}
  \textbf{until} convergence
  \item $\vect{X}_0 \ce \vect{X}, \ \lam_0 = \Lam, \ V \ce V_0$; \hfill \textit{new CMC surface}
 \end{enumerate}
\item \textbf{until} done traversing
\end{enumerate}

}

\smallskip
\hrule
\hrule
\medskip

\section{Examples}\label{sec:disc_and_examp}

\subsection{Spherical cap}

When the boundary of the surface is chosen to be the unit circle centered at the origin and contained in the horizontal plane $z = 0$, there is a continuous family of CMC spherical caps connected to the minimal planar disk. These surfaces can be uniquely characterized in terms of their enclosed volume $V$, which produces
\[
  \lam = 2H= -\frac{2 \pi^{1/3} \big(3V + \sqrt{\pi ^2 + 9V^2}-\pi ^{2/3} \big(\sqrt{\pi ^2 + 9V^2} - 3 V\big)^{1/3}\big)}{\sqrt{\pi ^2 + 9 V^2} \big(3V + \sqrt{\pi ^2 + 9 V^2}\big)^{1/3}}
\]
and a maximum vertical height
\[
 z_M = \frac{\left(3 V + \sqrt{9 V^2+\pi ^2}\right)^{2/3}-\pi ^{2/3}}{\pi^{1/3} (3V + \sqrt{9 V^2+\pi ^2})^{1/3}}.
\]
The resulting bifurcation diagram for positive signed volume is plotted in \cref{SphDropBifDia}.

Our algorithm was applied to construct these spherical caps starting from the flat interface $(x,y,0)$ with $(\lam,V) = (0,0)$; however, instead of using polar coordinates to initially discretize the horizontal coordinates---which introduces a coordinate singularity---the parameter domain $\Omega$ was chosen to be $(-1,1)^2$. Then $\Omega$ was mapped conformally~\cite{sctoolbox} into the unit disk, leaving the mean curvature unchanged, to produce a discretization of the flat surface. 

The algorithm was run for $m = n$ with $n = 12, 16, 20, 24, 32$ and $40$, yielding bifurcation diagrams that are nearly indistinguishable from the exact version shown in \cref{SphDropBifDia}(\textsc{left}). 
\begin{figure}[t!]
\begin{center}
 \includegraphics[width=1\textwidth]{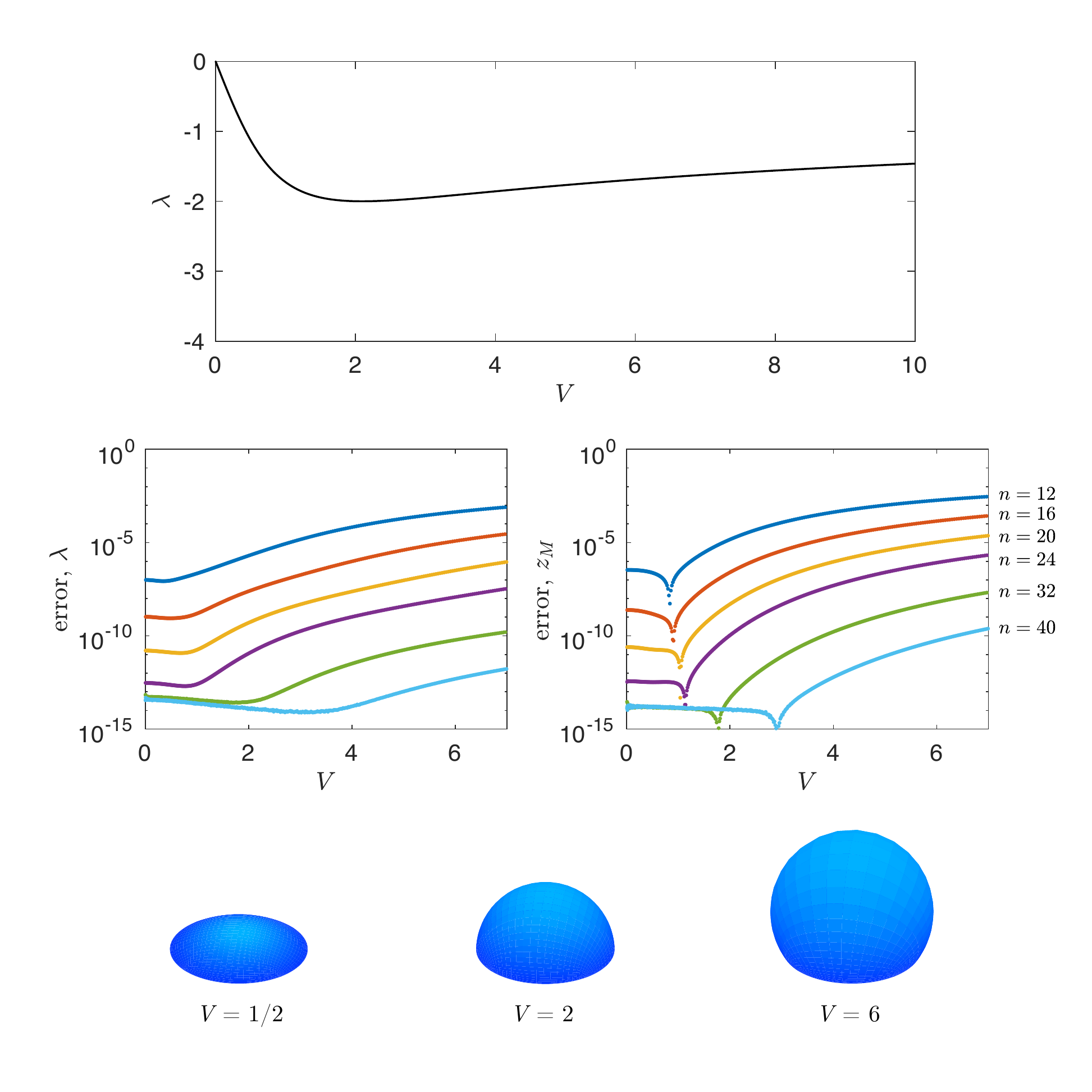}
\caption{\textsc{left:} Bifurcation diagram, plotting pressure $\lam = 2H$ versus volume $V$, of a CMC surface whose boundary is a unit circle. All of the solutions are stable, as determined by the sign of the first eigenvalue. \textsc{middle-left:} Relative error of $\lam$ versus volume $V$ for various discretizations with $m = n$. The value of $n$ for each curve follows the same sequence of discretizations shown in the middle-right panel . \textsc{middle-right:} Relative error of the maximum vertical height $z_M$ versus volume $V$ for various discretizations with $m = n$. \textsc{bottom:} Discrete spherical caps of volume $V = 1/2, 2$ and $6$ computed for $n = m = 40$.}
\label{SphDropBifDia}
\end{center}
\end{figure}
Given that the $V$ is prescribed precisely, the relative error between the exact and computed values of $\lam$ and $z_M$ are given in the top-right and bottom-right panels of \cref{SphDropBifDia}, respectively. These errors separately give measures on the accuracy of the mean curvature and the accuracy of the discrete representation of the surface. Observe that when $n = 24$ the algorithm predicts at least 7 digits of $\lam$ and 6 digits of $z_M$ correctly out to $V = 7$. Additionally, with $n = 24$ the average runtime of each predictor-correct step, including the computation of eigenvalue and eigenvectors, was a remarkable $0.14$ seconds on an iMac with a 1.8 GHz Intel Core i7 processor running MATLAB R2017a.

\subsection{Plateau--Rayleigh instability}

Cylinders $C$ of fixed length $2l$ and varying radius $r$ produce a family of CMC surfaces with $\lam = 1/r$. In 1873, Plateau~\cite{plateau1873stat} experimentally determined that, for decreasing radius, such cylinders become energetically unstable when $r = r_c \ce \alpha_c l $ for some constant critical constant $\alpha_c$, which was later determined by Rayleigh to be $(2\pi)^{-1}$~\cite{rayleigh1879instability}. This result, now know as the Plateau--Rayleigh instability, has subsequently been confirmed by numerous authors (e.g., see \cite{barbosa1984stability}) and extended to other similar situations~\cite{vogel1992stability,mccuan201extremities,lopez2012bifurcation}.

One extension is to consider a sessile drop attached to strip $\Omega = [-l,l] \times [-1,1]$ in the horizontal plane $\Pi_h = \R^2 \times \{0\}$ and bounded between two parallel vertical plates $\Pi_\pm = \{(x,y,z) \in \R^3 : y = \pm 1\}.$
Assuming that the tangent plane of the drop contacts $\Pi_\pm$ perpendicularly, the interface of the liquid takes on a cylindrical shape that can be parameterized by the mapping
\[
 \bx(u,v;r) = (r \sin(\zeta u/l),v,r\cos(\zeta u/l) - z_0)
\]
for $(u,v) \in \Omega = (-l,l) \times (-1,1)$. The parameters $r$, $\zeta$ and $z_0$ are defined as 
\[
 r = \frac{l^2+t^2}{2 t}, \quad 
 \zeta = \cos ^{-1}\left(\frac{l^2-t^2}{l^2+t^2}\right), \quad
 z_0 =\frac{l^2-t^2}{2 t},
\]
in terms of the free parameter $t \in (0,\infty)$ that represents the maximum vertical height of the drop. The pressure $\lam$ and  volume $V$ are consequently given by 
\[
 \lam = -1/r, \quad V = 2(r^2\zeta - l z_0), 
\]
and the resulting bifurcation diagram is shown in \cref{LiqRivulet_fig}. Note that $V:\R^+ \to \R^+$ is a bijective function of $t$ and, thus, can be used as a free parameter instead of $t$.

Stability of these cylindrical surfaces is determined by a variant of problem~\eqref{twist_eval} where the Dirichlet conditions on lines $v = \pm 1$ are replaced by homogeneous Neumann conditions since the tangent space of the drop on those boundaries contacts $\Pi_\pm$ perpendicularly. The resulting minimal eigenvalue is 
\[
 \mu_0 = \lam^2 \left(\frac{\pi^2}{4 \zeta^2}-1\right)+\frac{\pi^2}{4};
\]
see~\cite{mccuan201extremities}. When $V = 0$, the minimal eigenvalue $\mu_0$ is positive. Then as the volume increases, $\mu_0$ monotonically decreases to a negative global minimum. If $l < 0.3647 \ldots$, then $\min_{V \in(0,\infty)} \mu_0(V) < 0$ and  stability is lost (i.e., $\mu_0 = 0$) for $V = V^*$ at a super-critical pitchfork bifurcation, where two translationally asymmetric surfaces appear. In particular, when $l = 1/5$, the critical value $V^* = 0.140135\ldots$.

\cref{LiqRivulet_fig}(\textsc{top}) shows the bifurcation diagram for $l = 1/5$ computed with $n = 32$. Initially, the branch emanating from the point $(V,2H) = (0,0)$ was calculated, along with the minimal eigenvalue of each solution. A change in stability is predicted at $V = 0.140135$, a value with at least seven digits of accuracy. The branch of asymmetric solutions bifurcating from this point was traced by using the appropriate eigenvector as the predictor direction instead of the vector specified from the null space problem. Although these solutions are hard to reconstruct analytically, our proposed method has no trouble computing them. Finally, given $V$ exactly, the relative errors in the parameter $\lam$ and the minimal eigenvalue $\mu_0$ on the cylindrical-solution branch are plotted in the bottom two panels of \cref{LiqBrdgeFig}.

\begin{figure}[htbp!]
\begin{center}
 \includegraphics[width=1\textwidth]{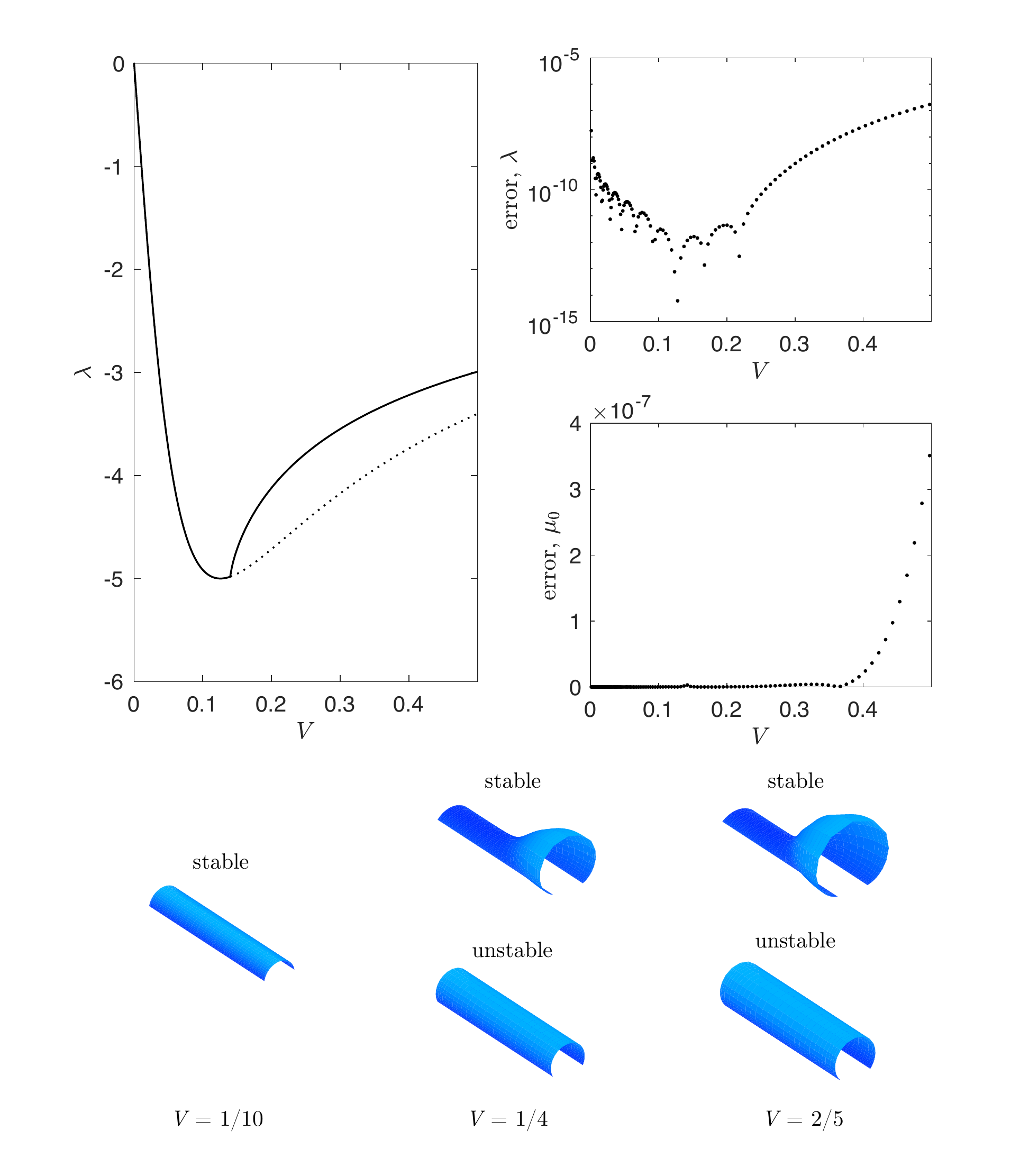}
\caption{\textsc{top:} Bifurcation diagram  of CMC surfaces in the rectangular strip $[-l,l]\times [-1,1]$ with $l = 1/5$ (calculated with $n = m = 32$) plotting the pressure $\lam$ versus the volume $V$ enclosed by the surface. Solid and dashed lines, respectively, represent stable and unstable solutions. The branch connected to $(V,\lam) = (0,0)$, consisting of translationally symmetric cylindrical surfaces, is stable up to $V = 0.140135\ldots$, at which point stability is lost and two asymmetric stable CMC surfaces appear; see the bottom panel for example surfaces. \textsc{top-right:} Uniform norm error in the calculated $\lam$ of the cylindrical-solution branch of the numerically computed bifurcation diagram. \textsc{middle-right:} Error of the numerically calculated first eigenvalue $\mu_0$, again, of the cylindrical-solution branch of the numerically computed bifurcation diagram. \textsc{bottom:} Computed CMC surfaces having volume $V = 1/10, 1/4 $ and $2/5$, with their corresponding stability labeled.}
\label{LiqRivulet_fig}
\end{center}
\end{figure}

\subsection{Liquid bridges}

Analytically predicting stability changes and bifurcations in families of CMC surfaces is a difficult task, even for many simple boundaries. Explicit calculations of the transitions are often limited to situations where the underlying geometry is known, like in the cylinder example above, or where special techniques can be used. 

For example, consider a volume of fluid stretched between two coaxial unit rings in parallel planes separated by a distance $1$. Such a configuration is known as a liquid bridge, and if the fluid shape is radially symmetric, the interface is given by a section of a Delaunay surface~\cite{kenmotsu2003surfaces}. Stability of symmetric liquid bridges was established in~\cite{patnaik1994volume} by investigating the behavior of a non-constant coefficient Sturm--Liouville problem derived from separating variables in~\eqref{twist_eval}. When the volume $V$ is sufficiently small, the rotationally symmetric solutions are physically stable; however, if $V$ is increased the past critical value (determined when the tangent plane on the boundary of the surface becomes parallel with the rings' plane) such  solutions become unstable and a branch of rotationally asymmetric surfaces with a radial bulge emerges~\cite{russo1986instability}. Numerical constructions of the nonrotational surfaces appeared in~\cite{hoffman1993comment}.

Starting from an initial cylinder, our method easily finds both the symmetric and asymmetric branches, along with their stability and the critical value at which the bifurcation occurs. The resulting diagram, with representative surfaces, is shown in \cref{LiqBrdgeFig}. 
\begin{figure}[htbp!]
\begin{center}
 \includegraphics[width=1\textwidth]{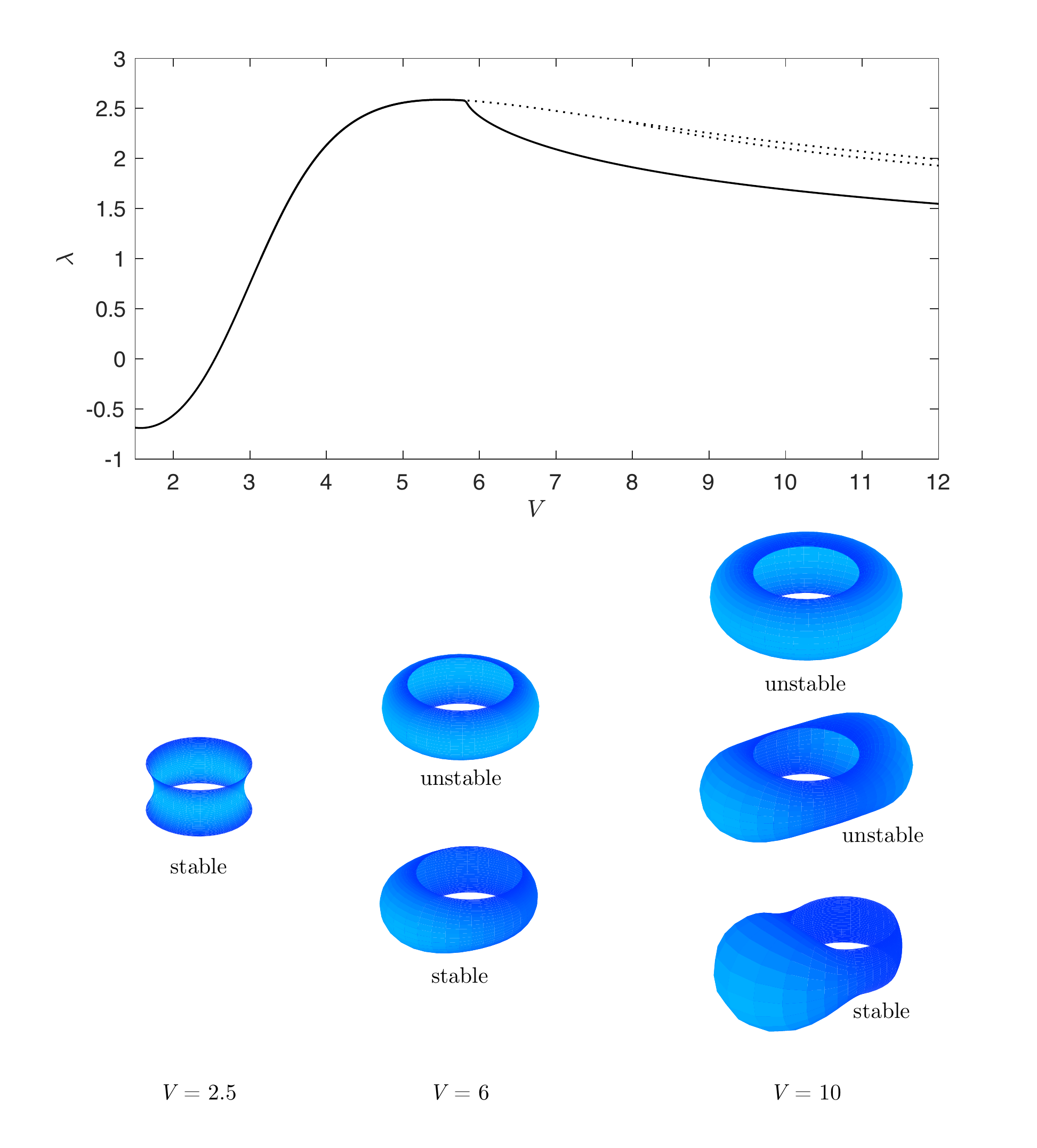}
\caption{\textsc{top:} Bifurcation diagram, plotting pressure $\lam$ versus volume $V$, of  constant mean curvature liquid bridges spanning parallel coaxial rings, of radius 1, separated by a distance $h = 1$. Again, solid and dashed lines, respectively, represent stable and unstable surfaces. The branch existing for $V < 3$ represents rotationally symmetric solutions. These solutions are stable for smaller volumes but become unstable at critical volume near $V = 6$, where stable family rotationally-invariant and asymmetric one-bulge solutions arise. Additionally, another symmetry breaking bifurcation occurs near $V = 8$: a family of rotationally-invariant surfaces with two radial bulges branches from of the axially symmetric surface. However, these solutions remain unstable, instead being stable and forming a sub-critcal pitchfork bifurcation. \textsc{bottom:} Computed CMC surfaces with the volume and stability labeled.}
\label{LiqBrdgeFig}
\end{center}
\end{figure}
Observe that the method also finds a third family of solutions, which bifurcates off the branch of symmetric solutions, that consists of nonrotational surfaces with two bulges. These two-mode solutions are unstable and, thus, cannot be constructed using direct energy minimization methods such as Surface Evolver.

\section{Conclusions}\label{sec:conc}

We have given a new approach for calculating discrete CMC surfaces that is an alternative to the standard algorithms involving surface triangulations or integrable systems theory. The method uses a normal graph parameterization to transform the standard CMC equation to a quasilinear second-order partial differential equation, which reduces the dimensionality of and removes invariances in unknown surface coordinates. Afterwards, the partial differential equation is solved with numerical arc-length continuation by shooting on the volume enclosed by the surface. 

The examples presented demonstrate that the procedure is fast, often less than a second per predictor-corrector iteration; is highly accurate, since a pseudospectral method is used for discretization; and easily constructs families of CMC surfaces with fixed boundary from a known CMC surface. Also, the algorithm identifies the stability of solutions and accurately detects bifurcation points.

A limitation of the algorithm as presented is it is relies on an underlying rectangular parameterization domain $\Omega$; however, this difficulty can be circumvented with other minor adaptations. For example, domain decomposition can be applied in a standard fashion, or invariances can be capitalized upon to conformally map $\Omega$ to an appropriate flat initial surface in $\R^3$; see section~\ref{sec:disc_and_examp}. Additionally, substituting in finite elements for pseudospectral collocation allows for added requisite, although high-order accuracy is sacrificed.

\end{document}